\title{A proof of Lov\'asz's theorem on maximal lattice-free sets}
\author{Gennadiy Averkov\footnote{University of Magdeburg, 39106 Magdeburg, Germany, email: averkov@math.uni-magdeburg.de}}
\newtheorem{nn}{}
\newtheorem{lemma}[nn]{Lemma}
\newtheorem{theorem}[nn]{Theorem}
\newcommand{\thmheader}[1]{{\upshape({#1}).}}
\newcommand{\real}{\mathbb{R}}
\newcommand{\eps}{\varepsilon}
\newcommand{\integer}{\mathbb{Z}}
\newcommand{\sprod}[2]{\left<{#1}\,,\,{#2}\right>}
\newcommand{\natur}{\mathbb{N}}
\newcommand{\rmcmd}[1]{\mathop{\mathrm{#1}}}
\newcommand{\intr}{\rmcmd{int}}
\newcommand{\rec}{\rmcmd{rec}}
\newcommand{\setcond}[2]{\left\{{#1}\,:\,{#2} \right\}}
\newcommand{\cl}{\rmcmd{cl}}
\newcommand{\lin}{\rmcmd{lin}}
\newcommand{\dotvar}{\,\cdot\,}
\begin{document}

\maketitle

\begin{abstract}
	Let $K$ be a maximal lattice-free set in $\real^d$, that is, $K$ is convex and closed subset of $\real^d$, the interior of $K$ does not cointain points of $\integer^d$ and $K$ is inclusion-maximal with respect to the above properties. A result of Lov\'asz assert that if $K$ is $d$-dimensional, then $K$ is a polyhedron with at most $2^d$ facets, and the recession cone of $K$ is spanned by vectors from $\integer^d$. A first complete proof of mentioned Lov\'asz's result has been published in a paper of Basu, Conforti, Cornu\'ejols and Zambelli (where the authors use Dirichlet's approximation as a tool). The aim of this note is to give another proof of this result. Our proof relies on Minkowki's first fundamental theorem from the gemetry of numbers. We remark that the result of Lov\'asz is relevant in integer and mixed-integer optimization.
\end{abstract}

\newtheoremstyle{itsemicolon}{}{}{\mdseries\rmfamily}{}{\itshape}{:}{ }{}

\newtheoremstyle{itdot}{}{}{\mdseries\rmfamily}{}{\itshape}{:}{ }{}

\theoremstyle{itdot}

\newtheorem*{msc*}{2010 Mathematics Subject Classification}

\begin{msc*}
	Primary 90C11;  Secondary 90C10, 52A01,  52C07
\end{msc*}

\newtheorem*{keywords*}{Keywords}

\begin{keywords*}
	cutting plane; lattice-free set; Lov\'asz's theorem
\end{keywords*}

\section{Introduction}

A set $K \subseteq \real^d$ is called \emph{lattice-free} if $K$ is closed, convex and the interior of $K$ does not contain points of $\integer^d$. A lattice-free set $K$ in $\real^d$ is called \emph{maximal} if $K$ is not properly contained in another lattice-free set. In  \cite{MR1114315} Lov\'asz formulated a result which provides a description of maximal-lattice free sets. In order to state the result of Lov\'asz we need the notion of recession cone. If $K$ is a nonempty closed convex set in $\real^d$, then the recession cone $\rec(K)$ of $K$ is the set of all vectors $u \in \real^d$ such that the translation of $K$ by vector $u$ is a subset of $K$.

\begin{theorem} \thmheader{\cite[\S3]{MR1114315}}
	\label{lov:thm}
	Let $K$ be a $d$-dimensional maximal lattice-free set in $\real^d$ and let $r$ be the dimension of $\rec(K)$. Then the following conditions hold:
	\begin{enumerate}[I.]
		\item $K$ is a polyhedron with at most $2^{d-r}$ facets;
		\item $\rec(K)$ is a linear space spanned by $r$ vectors from $\integer^d$;
		\item the relative interior of every facet of the polyhedron $K$ contains at least one point from $\integer^d$.
	\end{enumerate}
\end{theorem}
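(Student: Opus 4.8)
The plan is to characterize maximality geometrically and then count by a pigeonhole over residues modulo $2$. Concretely, I would prove that a maximal lattice-free $K$ is a polyhedron each of whose facets carries a lattice point in its relative interior, handle the recession cone separately so as to reduce to a bounded body, and finally bound the number of facets. The three substantive ingredients are: the rational linear structure of $\rec(K)$ (statement II), a facet-blocking argument (statement III together with polyhedrality, part of I), and a residue count (the bound in I). When $r=0$ the set $K$ is already bounded and Step~1 is vacuous.

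\textbf{Step 1 (recession cone).} I would first show that every recession direction is rational, and then that $\rec(K)$ is closed under negation. Rationality is exactly the point at which Minkowski's first theorem enters. Since $K$ is full-dimensional and $K=K+\rec(K)$, the interior $\intr(K)$ contains a whole tube $w+L_0+\rho B$ around an affine translate of $L_0:=\rec(K)$ (here $B$ is the unit ball) and meets no point of $\integer^d$; equivalently the orthogonal projection of $\integer^d$ to $L_0^\perp$ omits a $\rho$-ball. Applying Minkowski's theorem to symmetric tubular neighborhoods of $L_0$ produces the lattice points needed to rule this out unless $L_0$ is spanned by lattice vectors, so that the projected lattice is discrete; combining the lattice points it supplies with maximality forces rationality. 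Granting that each $u\in\rec(K)$ is a rational direction, hence a multiple of a lattice vector $v$, I then show $-v\in\rec(K)$: the set $K+\real v$ strictly contains $K$ if $-v\notin\rec(K)$, and it is still lattice-free, because a lattice point $z\in\intr(K+\real v)$ satisfies $z+t'v\in\intr(K)$ for all large $t'$, and choosing an integer $t'$ yields a lattice point in $\intr(K)$, a contradiction. Maximality then gives $-v\in\rec(K)$, so $\rec(K)=L$ is a linear subspace with $L\cap\integer^d$ of rank $r$, which is statement II.

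\textbf{Step 2 (reduction to a bounded body).} Because $K=K+L$, writing $\real^d=L\oplus L^\perp$ yields a product decomposition $K=L\oplus\bar K$ with $\bar K=K\cap L^\perp$ bounded (its recession cone is trivial) and full-dimensional in $L^\perp\cong\real^{d-r}$. As $L$ is rational, the projection $\Lambda:=\pi(\integer^d)$ to $L^\perp$ is a lattice of rank $d-r$, and $K$ is lattice-free precisely when $\intr(\bar K)\cap\Lambda=\emptyset$. Facets of $K$ correspond bijectively to facets of $\bar K$, so it suffices to establish statements I and III for the bounded, full-dimensional, maximal $\Lambda$-free body $\bar K$. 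For bounded $\bar K$ I would show each active supporting hyperplane is blocked by a lattice point: if $\{\sprod{a}{x}=b\}$ supports $\bar K$ along a facet $F$ whose relative interior contained no point of $\Lambda$, then pushing the hyperplane outward by $\delta>0$ enlarges $\bar K$; maximality forces a point of $\Lambda$ into the interior of every enlargement, and since the added region is bounded and $\Lambda$ is discrete, letting $\delta\to 0^+$ and passing to a limit produces a point of $\Lambda$ lying on $F$ and satisfying every other facet inequality strictly, i.e. a lattice point in $\operatorname{relint}(F)$, a contradiction. These blocking points cover $\partial\bar K$ by finitely many facets, so $\bar K$ is a polytope; this proves statement III and, with boundedness, that $\bar K$ and hence $K$ is a polyhedron.

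\textbf{Step 3 (counting).} Let $F_1,\dots,F_N$ be the facets of $\bar K$ and choose $p_i\in\operatorname{relint}(F_i)\cap\Lambda$ from Step~2; these are pairwise distinct. I consider the residues $p_i\bmod 2\Lambda$. If $p_i\equiv p_j\ (\mathrm{mod}\ 2\Lambda)$ for some $i\ne j$, then $\tfrac12(p_i+p_j)\in\Lambda$; but the open segment joining relative-interior points of two distinct facets lies in $\intr(\bar K)$ by a short convexity argument, so this midpoint would be a lattice point in the interior, contradicting $\Lambda$-freeness. Hence $i\mapsto p_i\bmod 2\Lambda$ is injective, and since $|\Lambda/2\Lambda|=2^{d-r}$ we obtain $N\le 2^{d-r}$, which is statement I. I expect the genuine obstacle to be the rationality of $\rec(K)$ in Step~1: the product reduction, the facet-blocking argument, and the residue count are robust, but turning the geometric fact that $\intr(K)$ conceals an entire tube around $L$ into the algebraic conclusion that $L$ is rational is precisely where a nontrivial input from the geometry of numbers — Minkowski's first theorem — appears unavoidable, and care is needed to reconcile the lattice points it yields with maximality.
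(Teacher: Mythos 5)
Your Steps 2 and 3 are essentially the paper's own argument (a rational-subspace reduction to a bounded body, the pushed-hyperplane argument for III, and the residue count modulo $2\Lambda$), with one circularity you should repair: you derive polyhedrality of $\bar K$ \emph{from} the facet-blocking argument, but that argument already speaks of ``a facet $F$'' and of ``every other facet inequality,'' which presupposes a polyhedral description. Polyhedrality has to come first; the paper obtains it by enclosing the bounded body in a box $[-N,N]^d$, separating it from each of the finitely many lattice points of the box by a halfspace $H_i$, and invoking maximality to force $K=H_1\cap\cdots\cap H_n\cap[-N,N]^d$.

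The genuine gap is in Step 1, and it is not merely a matter of missing detail: the claim you build the step on --- that every $u\in\rec(K)$ is a rational direction, ``hence a multiple of a lattice vector $v$'' --- is false. The set $K=\real^2\times[0,1]\subseteq\real^3$ is maximal lattice-free (both facets contain lattice points in their relative interiors), yet $(1,\sqrt2,0)\in\rec(K)$ is a multiple of no lattice vector. What II actually asserts is that the \emph{cone} $\rec(K)$ is a linear space spanned by $r$ lattice vectors; individual directions in it may well be irrational. Your proof of closure under negation is run direction-by-direction and needs $z+t'v\in\integer^d$ for integer $t'$, i.e. $v\in\integer^d$, so it collapses together with the false claim. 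The paper separates the two issues and does them in the opposite order. First (Lemma~\ref{lin:rec:cone:lem}) it shows $K-\rec(K)$ is lattice-free for an arbitrary, possibly irrational, recession cone: if a lattice point $z$ had $z+u\in\intr(K)$ for some $u\in\rec(K)$, then $z+[1,+\infty)u+B\subseteq\intr(K)$, and Minkowski's theorem applied to $[-N,N]u+B$ with respect to the lattice $t\integer^d$ places a lattice point $z+w$ in that one-sided tube; maximality then gives $K=K+\lin(\rec(K))$, so $\rec(K)$ is a linear space $L$. Only afterwards is rationality of $L$ addressed (Lemmas~\ref{rat:lin:rec:cone:lem} and~\ref{rat:space:lem}): if $L\ne\lin(L\cap\integer^d)$, Minkowski supplies nonzero lattice points $z_t$ within $1/t$ of $L$ but outside it, the normalized defects $(z_t-x_t)/\|z_t-x_t\|$ converge to a unit vector $a$ with $\real a\subseteq\cl(\integer^d+L)$ and $\real a\not\subseteq L$, and a chain of set identities shows $K+\real a$ is still lattice-free, contradicting maximality. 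Your sketch (``applying Minkowski's theorem to symmetric tubular neighborhoods of $L_0$ \dots forces rationality'') conflates these two stages: the two-sided tube and the ``projection of $\integer^d$ to $L_0^\perp$'' only make sense once linearity of $\rec(K)$ is already known, and even then the passage from a non-discrete projected lattice to a contradiction with maximality is precisely the content of the two lemmas you have not supplied.
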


Our formulation above is slightly different than that given in  \cite[\S3]{MR1114315}. One can also formulate a converse implication: it is clear that every $d$-dimensional lattice-free polyhedron $K$ in $\real^d$ which satisfies condition III of Theorem~\ref{lov:thm} is maximal.

In \cite[\S3]{MR1114315} Lov\'asz only gave a brief sketch of a possible proof of Theorem~\ref{lov:thm}. A complete proof of Theorem~\ref{lov:thm} has recently been presented in \cite[Theorem~2.2]{MR2724071}. The authors of \cite{MR2724071} also proved the following counterpart of Theorem~\ref{lov:thm} dealing with maximal lattice-free sets of dimension less than $d$.

\begin{theorem} \thmheader{\cite[Theorem~2.2.(ii)]{MR2724071}}
	\label{lov:deg:thm}
	A subset $K$ of $\real^d$ is a maximal lattice-free set $K$ of dimension less than $d$ if and only if $K$ is a translate of a $(d-1)$-dimensional linear space $L$ such that $L \ne \lin(L \cap \integer^d)$.
\end{theorem}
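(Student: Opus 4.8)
The plan is to split the equivalence into a geometric reduction followed by a one-dimensional arithmetic analysis. \emph{Reduction to a hyperplane.} First I would note that a convex set $K$ with $\dim K < d$ has empty interior in $\real^d$, so the defining condition on $\intr K$ holds vacuously; in particular every affine hyperplane is lattice-free. Hence, if $K$ is maximal lattice-free with $\dim K < d$, I choose any affine hyperplane $H$ containing the affine hull of $K$: since $H$ is itself lattice-free and $K \subseteq H$, maximality forces $K = H$. Thus every maximal lattice-free set of dimension less than $d$ is already an affine hyperplane, i.e.\ a translate $t + L$ of a $(d-1)$-dimensional linear space $L$, and it remains only to decide which such $H$ are maximal.

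\emph{When is a hyperplane maximal?} Fix a normal $n$ to $L$ and put $\ell(x) = \sprod{n}{x}$, so $H = \setcond{x}{\ell(x) = \gamma}$ with $\gamma = \sprod{n}{t}$. I would show that any closed convex $C \supseteq H$ has $L \subseteq \rec(C)$ (the recession cone may be computed from the single base point $t$, and $t + L \subseteq C$), hence $C$ is invariant under translation by $L$ and is therefore determined by $\ell(C) \subseteq \real$, a closed interval. So the full-dimensional closed convex sets containing $H$ are exactly the slabs $\setcond{x}{\alpha \le \ell(x) \le \beta}$ with $\gamma \in [\alpha,\beta]$, and such a slab is lattice-free precisely when the open interval $(\alpha,\beta)$ misses the subgroup $\Lambda := \ell(\integer^d) \subseteq \real$. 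Consequently $H$ fails to be maximal iff there is an open interval $(\alpha,\beta)$ with $\alpha < \beta$, $\gamma \in [\alpha,\beta]$ and $(\alpha,\beta)\cap\Lambda = \emptyset$; a short case distinction then shows this occurs exactly when $\Lambda$ is \emph{not} dense in $\real$. Thus $H$ is maximal lattice-free if and only if $\Lambda$ is dense.

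\emph{From density to the span condition.} The heart of the argument is to identify density of $\Lambda$ with the arithmetic condition $L \ne \lin(L \cap \integer^d)$. Here I use that a subgroup of $(\real,+)$ is either dense or cyclic (discrete), the cyclic case having rank at most $1$. The restriction $\ell|_{\integer^d}$ has kernel $L \cap \integer^d$, which is saturated in $\integer^d$ (if $kz \in L$ for some $z \in \integer^d$ and integer $k \ge 1$, then $z \in L$), so $\integer^d/(L\cap\integer^d)$ is free abelian and isomorphic to $\Lambda$; its rank equals $d - m$, where $m := \dim \lin(L \cap \integer^d) \le d-1$. Therefore $\Lambda$ is dense iff its rank is at least $2$, iff $m \le d-2$, iff $\lin(L \cap \integer^d) \subsetneq L$. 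Chaining the three steps gives both directions: a maximal lattice-free set of dimension $< d$ is a hyperplane with dense $\Lambda$, hence $L \ne \lin(L\cap\integer^d)$; conversely a translate of such an $L$ is a lattice-free hyperplane whose associated $\Lambda$ is dense, hence maximal.

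\emph{Main obstacle.} The reduction and the slab description are routine convexity arguments, so the step demanding the most care is the last one — converting the analytic statement that $\Lambda$ is dense into the lattice-theoretic statement about $\lin(L \cap \integer^d)$. The two facts that make this clean are the dichotomy for subgroups of $\real$ (dense versus cyclic) and the saturation of the kernel lattice, which together reduce the density question to a rank count for the quotient $\integer^d/(L\cap\integer^d)$.
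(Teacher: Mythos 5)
Your proof is correct, but it takes a genuinely different route from the paper's. The paper handles the forward direction by a unimodular change of coordinates reducing $K$ to $\real^{d-1}\times\{\alpha\}$ (where non-maximality is evident), and the converse by reusing its Minkowski-based machinery: Lemma~\ref{rat:space:lem} supplies a line $l\subseteq\cl(\integer^d+L)$ with $l\not\subseteq L$, and Lemma~\ref{rat:lin:rec:cone:lem} then shows that a lattice-free $K'\supsetneq K$ would force $K'+l=\real^d$ to be lattice-free, a contradiction. You instead classify all closed convex supersets of the hyperplane as slabs $\setcond{x}{\alpha\le\sprod{n}{x}\le\beta}$ and reduce both implications to the single one-dimensional criterion that $\Lambda=\setcond{\sprod{n}{z}}{z\in\integer^d}$ be dense in $\real$, which you settle via the dense-or-cyclic dichotomy for subgroups of $\real$ and a rank count for the free quotient $\integer^d/(L\cap\integer^d)$ (the purity of $L\cap\integer^d$ in $\integer^d$ being the key observation that makes the quotient torsion-free). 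All the steps you flag as routine do go through: $L\subseteq\rec(C)$ for any closed convex $C\supseteq H$, the case distinction for discrete $\Lambda$, and the identification of $\mathrm{rank}(\Lambda)$ with $d-\dim\lin(L\cap\integer^d)$. What the paper's route buys is economy within the note (the two lemmas are already in place for Theorem~\ref{lov:thm}) and consistency with its stated aim of deriving everything from Minkowski's theorem; what your route buys is a self-contained, elementary argument that avoids the geometry of numbers entirely and yields the stronger by-product of an explicit description of every lattice-free convex set containing a given hyperplane. Your density condition is, of course, just a restatement of $L\ne\lin(L\cap\integer^d)$, and is essentially the content that Lemma~\ref{rat:space:lem} extracts from Minkowski's theorem.
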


The aim of this note is to provide a short argument by which one can derive Theorems~\ref{lov:thm} and \ref{lov:deg:thm} from Minkowski's first fundamental theorem (the corresponding proofs from \cite{MR2724071} use Dirichlet's approximation). We remark that integer maximal lattice-free sets are relevant in integer and mixed-integer optimization. In fact, such sets play an important role in the cutting-plane theory (for more details see \cite{MR2480507,MR0290793,MR2724071}).

\section{Proofs}

We shall use basic facts and notions from convex geometry (see \cite{MR1216521}) and basic information on lattices (see \cite{MR893813}). The zero vector of $\real^d$ is denoted by $o$. The standard scalar product and standard Euclidean norm of $\real^d$ are denoted by $\sprod{\dotvar}{\dotvar}$ and $\|\dotvar\|$, respectively.  By $\intr$ (resp. $\cl$) we denote the interior (resp. closure) operation in the Euclidean topology. The notation $\lin$ stands for the linear hull. If $T \subseteq \real$ and $u \in \real^d$, let $T u := \setcond{t u}{t \in T}$. For $a \in \real^d, X, Y \subseteq \real^d$ we use the standard notations $X +Y := \setcond{x+y}{x \in X, \ y \in Y}$, $X - Y := \setcond{x-y}{x \in X, \ y \in Y}$, $-X := \setcond{-x}{x \in X}$ and $a+X := \setcond{a+x}{x \in X}$.

\begin{theorem} \thmheader{Minkowski's first fundamental theorem, \cite[\S5]{MR893813}} \label{mink:thm}
	Let $K$ be a $d$-dimensional bounded and closed convex set in $\real^d$ which is symmetric in the origin (that is, $K=-K$). Let the volume of $K$ be at least $2^d$. Then there exists a vector $z \in \integer^d \setminus \{o\}$ belonging to $K$.
\end{theorem}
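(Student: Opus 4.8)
The plan is to deduce this classical statement from a pigeonhole principle due to Blichfeldt and then to dispose of the equality case by a limiting argument. First I would establish Blichfeldt's lemma: if $S \subseteq \real^d$ is measurable with volume strictly greater than $1$, then there exist two distinct points $x,y \in S$ with $x-y \in \integer^d$. To see this, consider the folding map $\pi \colon \real^d \to [0,1)^d$ that reduces each coordinate modulo $1$. Decomposing $S$ according to the translates $v + [0,1)^d$ with $v \in \integer^d$ and transporting each piece into $[0,1)^d$ by $\pi$, one sees that if $\pi$ were injective on $S$ the total volume would be at most $1$, the volume of $[0,1)^d$. Since the volume of $S$ exceeds $1$, two distinct points $x,y \in S$ must satisfy $\pi(x)=\pi(y)$, that is $x-y \in \integer^d \setminus \{o\}$.

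Next I would treat the strict case in which the volume of $K$ exceeds $2^d$. Then $\frac{1}{2}K$ has volume strictly greater than $1$, so Blichfeldt's lemma supplies distinct $x,y \in \frac{1}{2}K$ with $z := x-y \in \integer^d \setminus \{o\}$. Writing $x=\frac{1}{2}a$ and $y=\frac{1}{2}b$ with $a,b \in K$, the symmetry $K=-K$ gives $-b \in K$, and convexity of $K$ then gives $z = \frac{1}{2}a + \frac{1}{2}(-b) \in K$. Hence $K$ already contains the nonzero lattice vector $z$, which proves the theorem under the strict volume hypothesis.

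Finally, I would remove the strictness. When the volume equals $2^d$ exactly, apply the previous step to the dilates $(1+\eps)K$ for $\eps>0$, each of which has volume larger than $2^d$ and hence contains some $z_\eps \in \integer^d \setminus \{o\}$. Since $K$ is bounded, all these dilates lie in the fixed bounded set $2K$, which contains only finitely many lattice points; therefore one nonzero $z \in \integer^d$ equals $z_\eps$ along a sequence $\eps \to 0$. From $z \in (1+\eps)K$ we obtain $\frac{1}{1+\eps}z \in K$, and letting $\eps \to 0$ together with the closedness of $K$ yields $z \in K$. The step I expect to require the most care is precisely this last passage to the limit: Blichfeldt's lemma produces a lattice point only under a strict volume inequality, whereas the theorem permits volume exactly $2^d$, so it is the compactness of $K$ (boundedness together with closedness) that is essential for keeping the lattice point inside $K$. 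The pigeonhole argument is the conceptual heart of the proof, while the convexity-and-symmetry manipulation in the middle step is routine.
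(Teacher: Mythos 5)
Your argument is correct and complete: Blichfeldt's pigeonhole lemma, the symmetry-and-convexity step $z=\tfrac12 a+\tfrac12(-b)\in K$, and the compactness argument handling volume exactly $2^d$ are all sound (note that the dilation step implicitly uses $o\in K$, which does hold since $K$ is nonempty, convex and symmetric). However, there is nothing in the paper to compare it with: the paper states Minkowski's first fundamental theorem as a cited classical result from Gruber--Lekkerkerker and uses it as a black box, offering no proof of its own. What you have written is the standard textbook proof of that classical theorem, so it is a perfectly reasonable way to fill in the citation, but it is supplementary to the paper rather than an alternative to an argument the author actually gives.
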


Let $K$ be as in Theorem~\ref{mink:thm} and let $t \in \natur$ be such that the volume of $\frac{1}{t} K$ is at least $2^d$ (which means that the volume of $K$ is at least $(2 t)^d$). From Theorem~\ref{mink:thm} we deduce that there exists $z \in (t\integer^d) \setminus \{o\}$ belonging to $K$. The latter conclusion is equivalent to the version of Minkowski's theorem for the lattice $t \integer^d$ in place of $\integer^d$.

\begin{lemma} \label{lin:rec:cone:lem}
	Let $K \subseteq \real^d$ be a $d$-dimensional lattice-free set. Then $K - \rec(K) = K + \lin(\rec(K))$ is lattice-free.
\end{lemma}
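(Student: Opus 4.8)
The plan is to separate the asserted set identity from the three requirements hidden in the word ``lattice-free'' (convexity, closedness, and an interior disjoint from $\integer^d$), and to dispatch them one at a time. Write $C := \rec(K)$ and $L := \lin(C)$. The identity $K - C = K + L$ is pure convex algebra: since $C$ is the recession cone of $K$ we have $K + C = K$, and since $C$ is a convex cone its linear hull is $L = C - C$; hence $K + L = K + (C - C) = (K + C) - C = K - C$. Convexity of $K + L$ is then immediate, as it is a Minkowski sum of convex sets. This reduces the lemma to two claims: that $\intr(K + L)$ contains no integer point, and that $K + L$ is closed.

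For the interior, I would argue by contradiction, and this is where an approximation argument (Theorem~\ref{mink:thm}, or its classical Diophantine surrogate) really enters. Suppose $z \in \integer^d \cap \intr(K + L)$. Projecting orthogonally along $L$ onto $L^{\perp}$ and using the standard identity $\pi(\intr K) = \mathrm{relint}\,\pi(K)$ for images of convex sets, one finds $z \in \intr(K) + L$; write $z = p + c_1 - c_2$ with $p \in \intr(K)$ and $c_1, c_2 \in C$. Because $\intr(K) + C \subseteq \intr(K)$ (translating a ball contained in $K$ by a recession vector keeps it in $K$), the point $p^{\ast} := p + c_1$ again lies in $\intr(K)$, and $z = p^{\ast} - c_2$. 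Fix $\rho > 0$ so that the ball $B(p^{\ast},\rho)$ of radius $\rho$ about $p^{\ast}$ lies in $\intr(K)$; then, since $s c_2 \in C$ for $s \ge 0$, the whole half-tube $\bigcup_{s \ge 0} B(p^{\ast} + s c_2,\rho)$ lies in $\intr(K)$, and $p^{\ast} + s c_2 = z + (1+s) c_2$. Now by Dirichlet's theorem (equivalently, by pigeonhole on the torus $\real^d / \integer^d$) there is a positive integer $q$ with $\mathrm{dist}(q c_2, \integer^d) < \rho$; taking $s = q - 1$ and using $z \in \integer^d$ gives $\mathrm{dist}(z + q c_2, \integer^d) < \rho$, so the ball $B(z + q c_2,\rho) \subseteq \intr(K)$ contains a point of $\integer^d$. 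This contradicts the lattice-freeness of $K$, so no such $z$ exists.

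The step I expect to be the main obstacle is \emph{closedness} of $K + L$. It does not follow from soft convex-analytic principles, since the projection of a closed convex set along a subspace containing its recession cone need not be closed; consequently $K + L = \pi^{-1}(\pi(K))$ need not be closed for a general closed convex $K$. The cleanest and safest route is to replace $K + L$ by its closure $\cl(K + L)$: for a convex set the interior is unchanged under taking the closure, so the integer-point-free interior established above is preserved, and closedness then holds by construction. If instead one insists on $K + L$ itself, closedness has to be extracted by a direct limiting argument--given $x_n \in K$, $\ell_n \in L$ with $x_n + \ell_n \to w$, one produces $w = x + \ell$ with $x \in K$, $\ell \in L$, handling the bounded case by closedness of $K$ and ruling out the divergent case via the fact that every normalized limit direction lies in $C \subseteq L$--and this is precisely the delicate point that the closure argument sidesteps.
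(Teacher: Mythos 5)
Your argument is correct, and it establishes the substantive content of the lemma by a genuinely different route at the decisive step. Both proofs first derive $K-\rec(K)=K+\lin(\rec(K))$ from $K+\rec(K)=K$ and $\lin(\rec(K))=\rec(K)-\rec(K)$, and both reduce the integer-point claim to the same configuration: a point $z\in\integer^d$ and a vector $c\in\rec(K)$ with a ball $B(z+c,\rho)\subseteq\intr(K)$, hence a half-tube $\bigcup_{s\ge 0}B\bigl(z+(1+s)c,\rho\bigr)\subseteq\intr(K)$, inside which one must exhibit an integer point. You do this with Dirichlet's simultaneous approximation (pigeonhole on the torus $\real^d/\integer^d$): pick $q\in\natur$ with $\mathrm{dist}(qc,\integer^d)<\rho$, so that $B(z+qc,\rho)$ meets $\integer^d$. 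The paper instead applies Minkowski's first theorem to the long thin origin-symmetric body $[-N,N]c+B$ relative to the dilated lattice $t\integer^d$, obtaining a lattice vector $w$ of norm at least $t$ lying near the ray $[0,\infty)c$, and uses $z+w$. The two arguments are structurally parallel and of comparable length; but the paper's whole point is to replace the Dirichlet-based proof of Basu, Conforti, Cornu\'ejols and Zambelli by one resting on Minkowski's theorem, so your version is in this respect closer to the earlier proof than to this one.

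On closedness you are in fact more careful than the paper, whose proof never addresses it --- and it is a real issue: $K+\lin(\rec(K))$ can fail to be closed even for lattice-free $K$. For example, $K=\setcond{(x,y,z)\in\real^3}{(x-\tfrac12)^2+(y-\tfrac12)^2<\tfrac1{100},\ z\ge\bigl(\tfrac1{100}-(x-\tfrac12)^2-(y-\tfrac12)^2\bigr)^{-1}}$ is closed, convex, $3$-dimensional and contains no integer point, its recession cone is the ray $[0,\infty)(0,0,1)$, yet $K+\lin(\rec(K))$ is an open solid cylinder. Your remedy --- pass to $\cl(K+\lin(\rec(K)))$, whose interior coincides with $\intr(K+\lin(\rec(K)))$ because the set is convex and full-dimensional --- is exactly what the lemma's only application needs: maximality then forces $K=\cl(K-\rec(K))\supseteq K+\lin(\rec(K))$, so $\rec(K)$ is a linear space. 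So your proposal not only proves the lemma's content but also repairs a small gap in its statement.
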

\begin{proof}
	It is not difficult to verify the equality $\rec(K)-\rec(K) = \lin(\rec(K))$. The latter equality yields $K - \rec(K) = (K + \rec(K)) - \rec(K) = K + \lin(\rec(K))$.
	We show that $K-\rec(K)$ is lattice-free by contradiction. Assume $K-\rec(K)$ is  not lattice-free. Then $\intr(K - \rec(K)) \subseteq \intr(K) - \rec(K)$ contains a point $z$ of $\integer^d$. Hence there exists $u \in \rec(K)$ such that $z+ u \in \intr(K)$. Let $B$ be a sufficiently small closed Euclidean ball with center at $o$ such that $z+u+B \subseteq \intr(K)$. We introduce a parameter $t \in \natur$, which will be fixed later. Let us choose $N = N(t) >0$ large enough to ensure that the volume of  $[-N,N] u + B$ is at least $(2t)^d$. By Minkowski's theorem, there exists $w \in (t \integer^d) \setminus \{o\}$ with $w \in [-N,N]u+B$. Possibly replacing $w$ by $-w$, we get $w \in [0,N]u+B$. Since $w \in (t \integer^d) \setminus \{o\}$, we obtain $\|w\|\ge t$. Thus, choosing $t$ large enough we obtain $z+w \in z + [1,+\infty) u +B$, where  $ z + [1,+\infty) u +B \subseteq \intr(K)$. It follows $z+w \in \integer^d \cap \intr(K)$ contradicting the assumption on $K$.
\end{proof}

\begin{lemma} \label{rat:lin:rec:cone:lem}
	Let $K \subseteq \real^d$ be lattice-free and $d$-dimensional and let $L:=\rec(K)$ be a linear space. Let $M \subseteq \cl(\integer^d + L)$ be a linear space. Then $K+M$ is lattice-free.
\end{lemma}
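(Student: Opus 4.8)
The plan is to argue by contradiction, turning a hypothetical lattice point in $\intr(K+M)$ into a lattice point in $\intr(K)$, which is impossible since $K$ is lattice-free. So suppose there were some $z \in \integer^d$ with $z \in \intr(K+M)$. Throughout I would use one elementary consequence of $L = \rec(K)$: for every $\ell \in L$ the definition of the recession cone gives $K + \ell \subseteq K$, and therefore $\intr(K) + \ell \subseteq \intr(K)$. This ``interior absorption'' in recession directions is what will ultimately let me push a point back into the interior of $K$.

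The first substantial step is to write $z = x + m$ with $x \in \intr(K)$ and $m \in M$. To obtain this, choose a linear complement $W$ of $M$ and let $p \colon \real^d \to W$ be the projection with kernel $M$. Because $M$ is a linear space, $K + M$ is the full preimage $p^{-1}(p(K))$, and since $p$ is a continuous open surjection, $z \in \intr(K+M)$ forces $p(z)$ to lie in the interior $\intr_W(p(K))$ of $p(K)$ taken relative to $W$. As $K$ is $d$-dimensional and $p$ is surjective, $p(K)$ is full-dimensional in $W$, so the standard convexity identity $\intr_W(p(K)) = p(\intr(K))$ applies (see \cite{MR1216521}). Hence $p(z) = p(x)$ for some $x \in \intr(K)$, and $m := z - x$ lies in $\ker p = M$, as required. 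I expect this decomposition to be the main obstacle of the proof, since it is the one point where the full-dimensionality of $K$ and the behaviour of interiors under linear projections are genuinely needed; everything else is an approximation argument.

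With the decomposition secured, fix $\rho > 0$ so that the closed ball $x + \rho B \subseteq \intr(K)$, where $B$ is the closed Euclidean unit ball centred at $o$. Now the hypothesis $M \subseteq \cl(\integer^d + L)$ enters: since $m \in M$ lies in the closure of $\integer^d + L$, there exist $z' \in \integer^d$ and $\ell \in L$ with $\| m - (z' + \ell) \| < \rho$. Applying the interior absorption to $\ell \in L$ yields $(x + \ell) + \rho B = (x + \rho B) + \ell \subseteq \intr(K)$. Writing $z - z' = (x + \ell) + (m - z' - \ell)$ and using $\| m - z' - \ell \| < \rho$, we find that $z - z'$ lies in the closed ball of radius $\rho$ about $x + \ell$, hence $z - z' \in \intr(K)$. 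But $z - z' \in \integer^d$, which contradicts $K$ being lattice-free. This contradiction establishes that $K + M$ is lattice-free.
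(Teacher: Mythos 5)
Your proof is correct and is essentially the paper's argument run in reverse: the paper establishes the contrapositive through a chain of set inclusions whose first step is your recession-cone ``absorption'' $\intr(K)+L\subseteq\intr(K)$, whose middle steps are your approximation of $m$ by $z'+\ell$ using $M\subseteq\cl(\integer^d+L)$ together with the openness of $\intr(K)$, and whose final step is exactly your decomposition $\intr(K+M)\subseteq\intr(K)+M$. The only substantive difference is that you justify that last inclusion explicitly (via the projection $p$ with kernel $M$ and the identity $\intr_W(p(K))=p(\intr(K))$), whereas the paper asserts it without comment.
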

\begin{proof}
	The assertion is verified by the following chain of implications:
	\begin{align*}
		\intr(K) \cap \integer^d = \emptyset & & & \Rightarrow & (\intr(K) +L) \cap \integer^d =&\emptyset 	& & \Rightarrow & \intr(K) \cap (\integer^d + L) = &\emptyset \\
		& & & \Rightarrow & \intr(K) \cap \cl(\integer^d + L)  = &\emptyset & & \Rightarrow & \intr(K) \cap (\integer^d+ M) = &\emptyset \\
		& & & \Rightarrow &  (\intr(K) + M) \cap \integer^d = &\emptyset & & \Rightarrow &   (\intr(K+ M )) \cap \integer^d = &\emptyset.
	\end{align*}
\end{proof}

\begin{lemma} \label{rat:space:lem}
	Let $L \subseteq \real^d$ be a linear space such that $L \ne \lin(\integer^d \cap L)$. Then there exists a line $l$ through the origin  satisfying $l \subseteq \cl(\integer^d + L)$ and  $l \not\subseteq L$.
\end{lemma}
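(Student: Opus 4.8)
The plan is to reduce the statement to a non-discreteness property of the orthogonal projection of $\integer^d$ onto $L^\perp$, and then to extract the required line from that non-discreteness by an elementary limiting argument. Let $P\colon \real^d \to L^\perp$ be the orthogonal projection onto the orthogonal complement $L^\perp$ of $L$. For every $z \in \integer^d$ we have $z - P(z) \in L$, so $P(\integer^d) \subseteq \integer^d + L$ and hence $\cl(P(\integer^d)) \subseteq \cl(\integer^d + L)$, where the closure on the left is taken inside $L^\perp$. Thus it suffices to produce a nonzero vector $e \in L^\perp$ with $\real e \subseteq \cl(P(\integer^d))$: the line $l := \real e$ then lies in $\cl(\integer^d + L)$, and $l \not\subseteq L$ because $o \ne e \in L^\perp$. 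I will obtain such an $e$ by showing that the subgroup $P(\integer^d)$ of $L^\perp$ is \emph{not} discrete, using the fact that the closure of a non-discrete subgroup of a Euclidean space contains a line through $o$; this fact is justified in the last step.

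The non-discreteness I would prove by a rank count. The decisive observation is that $\integer^d \cap L$ is saturated in $\integer^d$: if $j v \in L$ for some $v \in \integer^d$ and some positive integer $j$, then $v \in L$ since $L$ is a linear space, so $v \in \integer^d \cap L$. Consequently $\integer^d/(\integer^d \cap L)$ is torsion-free, hence free abelian, and its rank is $d - m$, where $m := \dim \lin(\integer^d \cap L)$ equals the rank of $\integer^d \cap L$. Since the kernel of $P$ restricted to $\integer^d$ is exactly $\integer^d \cap L$, the group $P(\integer^d)$ is isomorphic to $\integer^d/(\integer^d \cap L)$ and therefore has rank $d - m$. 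If $P(\integer^d)$ were discrete it would be a lattice in $L^\perp$, hence free abelian of rank at most $\dim L^\perp = d - \dim L$. But the hypothesis $L \ne \lin(\integer^d \cap L)$ means precisely $m < \dim L$, which forces $d - m > d - \dim L$, a contradiction. Hence $P(\integer^d)$ is not discrete.

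Finally I would make the passage from non-discreteness to a line explicit, which I expect to be the only delicate point. Non-discreteness yields a sequence of nonzero vectors $p_j \in P(\integer^d)$ with $p_j \to o$; write $p_j = P(z_j)$ with $z_j \in \integer^d$. Passing to a subsequence, the unit vectors $p_j/\|p_j\|$ converge to some unit vector $e \in L^\perp$. Given an arbitrary $t \in \real$, put $n_j := \floor{t/\|p_j\|}$. Then $n_j p_j = P(n_j z_j) \in P(\integer^d) \subseteq \integer^d + L$, while $n_j p_j = (n_j \|p_j\|)\,(p_j/\|p_j\|) \to t e$ because $n_j \|p_j\| \to t$. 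Therefore $t e \in \cl(\integer^d + L)$ for every $t \in \real$, so $\real e \subseteq \cl(\integer^d + L)$, and the line $l = \real e$ has the desired properties. The main obstacle is exactly this limiting step — verifying that suitable integer multiples of the shrinking vectors $p_j$ fill out the whole line $\real e$ in the closure — whereas the reduction and the rank count are routine.
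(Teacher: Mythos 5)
Your proof is correct, but it reaches the key existence statement by a genuinely different route than the paper. Both arguments ultimately rest on producing nonzero elements of the projection of $\integer^d$ onto $L^\perp$ that are arbitrarily close to $o$, and both then finish with essentially the same limiting step (normalize the short vectors, extract a convergent subsequence of unit vectors $e$, and multiply by $\floor{t/\|p_j\|}$ to fill out the line $\real e$ inside $\cl(\integer^d+L)$) --- your closing paragraph is in fact nearly verbatim the paper's. The difference is in how the short vectors are obtained. The paper picks $u \in L\setminus\{o\}$ orthogonal to $\lin(\integer^d\cap L)$, notes that the unit cylinder $B+\real u$ meets $\integer^d\cap L$ only in $o$, and applies Minkowski's first theorem to the long thin cylinders $[-N,N]u+\frac{1}{t}B$ to find nonzero integer points within distance $\frac{1}{t}$ of $L$ but outside $L$. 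You instead observe that $P(\integer^d)\cong\integer^d/(\integer^d\cap L)$ is free abelian of rank $d-m$ (using saturation of $\integer^d\cap L$), which exceeds $\dim L^\perp = d-\dim L$ precisely because $m<\dim L$, so $P(\integer^d)$ cannot be a discrete subgroup of $L^\perp$. Your rank count is clean and avoids both the choice of $u$ and the volume estimate, but it imports a different standard input, namely that a discrete subgroup of $\real^n$ is free of rank at most $n$; the paper's route is deliberately engineered so that Minkowski's first theorem is the \emph{only} geometry-of-numbers tool used anywhere in the note, which is the stated point of the whole paper. So your argument would be a perfectly valid substitute in isolation, but it would undercut the paper's methodological claim if spliced in as is.
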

\begin{proof}
	Choose $u \in L \setminus \{o\}$  orthogonal to $\lin(\integer^d \cap L)$. Let $B$ be a closed Euclidean ball in $\real^d$ with center in $o$ and of radius $<1$. By construction one has 
	\begin{equation} \label{inf:cyl:eq}
		(B + \real u) \cap L \cap \integer^d = \{o\}
	\end{equation}
Consider an arbitrary $t \in \natur$. There exists a sufficiently large $N = N(t) >0$ such that the volume of $[-N,N] u + \frac{1}{t} B$ is larger than $2^d$. By Minkowski's theorem, there exists $z_t \in \integer^d \setminus \{o\}$ with $z_t \in  [-N,N] u + \frac{1}{t} B$. In view of \eqref{inf:cyl:eq}, $z_t \not\in L$. Let $x_t$ be the orthogonal projection of $z_t$ onto $L$. By construction 
\begin{equation} \label{dist:to:proj:eq}
	0<\|z_t - x_t\|< \frac{1}{t}.
\end{equation}
For an appropriate infinite subset $T$ of $\natur$, the unit vector $(z_t - x_t)/\|z_t - x_t\|$ converges to some unit vector $a$, as $t$ goes to infinity over points of $T$. Taking into account \eqref{dist:to:proj:eq} we see that every vector $ \lambda a$ with $\lambda \in \real$ can be approximated by a vector of the form $n_t (z_t - x_t)$, with $t \in T$ and an appropriate $n_t \in \integer$, arbitrarily well. Hence $a \in \cl(\integer^d+L)$. The assertion follows by choosing $l := \real a$.
\end{proof}

The following lemma is well-known (see also \cite{Bell77,MR0387090,MR0452678,MR1114315}).

\newcommand{\modulo}{\mathrm{mod}}

\begin{lemma} \thmheader{Parity lemma} \label{parity:lem}
	Let $w_1,\ldots,w_m \in \integer^d$ with $m \in \natur$ and $m > 2^d$. Then there exist $1 \le i < j \le m$ such that $(w_i+w_j) / 2 \in \integer^d$.
\end{lemma}
\begin{proof}
	Since $\integer^d / (2 \integer^d) = (\integer / 2 \integer)^d$ has cardinality $2^d$, there exist $1 \le i < j \le m$ such that $w_i \equiv w_j \, (\modulo \, 2 \integer^d)$. Hence $(w_i+w_j)/2 \in \integer^d$.
\end{proof}

\begin{proof}[Proof of Theorem~\ref{lov:thm}]
	First we consider the case that $K$ is bounded (that is, $r=0$). In this case arguments from \cite{MR2724071} can be employed. In order to give a self-contained presentation we repeat these arguments. In the case of bounded $K$ we only need to verify II and III. Choose $N > 0 $ sufficiently large to ensure $K \subseteq [-N,N]^d$. Let $[-N,N]^d \cap \integer^d = \{z_1,\ldots,z_n\}$, where $n  \ge 0$. By separation theorems, for every index $i$ with $1 \le i \le n$ there exists a closed halfspace $H_i$ such that $K \subseteq H_i$ and $z_i$ is not in the interior of $H_i$. Hence $K \subseteq H_1 \cap \cdots \cap H_n \cap [-N,N]^d$ and we even have $K = H_1 \cap \cdots \cap H_n \cap [-N,N]^d$, since otherwise $K$ were not maximal. This shows that $K$ is a polytope. Let $m$ be the number of facets of $K$ and let $K = \setcond{x \in \real^d}{\sprod{a_1}{x} \le t_1,\ldots,\sprod{a_m}{x} \le t_m}$ for some $a_1,\ldots,a_m \in \real^d \setminus \{o\}$ and $t_1,\ldots,t_m \in \real^d$. Assume III were not valid, that is, the relative interior of some facet, say the facet $K \cap \setcond{x \in \real^d}{\sprod{a_1}{x} = t_1}$, contains no point of $\integer^d$. For every $\eps>0$, the polyhedron 
	\[K_\eps:=\setcond{x \in \real^d}{\sprod{a_1}{x} \le t_1 + \eps,\sprod{a_2}{x} \le t_2,\ldots,\sprod{a_2}{x} \le t_m}
	\]
is bounded. Thus, $\intr(K_\eps) \cap \integer^d$ is finite. By the assumption, every  $z \in \intr(K_\eps) \cap \integer^d$ satisfies $\sprod{a_1}{x} > t_1$. Hence, if $\eps>0$ is small enough, $K_\eps$ is lattice-free, which is a contradiction to the maximality of $K$. This verifies III. Let us show the bound on the number of facets formulated in I. We introduce $w_1,\ldots,w_m \in \integer^d$ by choosing an integer vector in the relative interior of each of the $m$ facets of $K$. If $m>2^d$, then by the parity lemma (Lemma~\ref{parity:lem}) there exist $1 \le i < j \le m$ such that $(w_i+w_j)/2 \in \integer^d$. The integer point $(w_i+w_j)/2$ lies in the interior of $K$, a contradiction. This shows $m \le 2^d$.

	Now let us consider the case of unbounded $K$. By Lemma~\ref{lin:rec:cone:lem}, $\rec(K)$ is a linear space. By Lemmas~\ref{rat:lin:rec:cone:lem} and \ref{rat:space:lem}, the linear space $\rec(K)$ is spanned by $r$ vectors from $\integer^d$. Thus, II holds. Let $L:=\rec(K)$. Let us fix a basis $u_1,\ldots,u_r$ of the lattice $L \cap \integer^d$ and extend this basis to a basis $u_1,\ldots,u_d$ of the lattice $\integer^d$. The linear transformation
	\[
		A: \ t_1 u_1 + \cdots + t_d u_d \in \real^d \mapsto (t_1,\ldots,t_d) \in \real^d
	\]
	is bijective and maps $\integer^d$ onto $\integer^d$ (that is, $A$ is unimodular). By construction, $A$ maps $L$ onto $\real^r \times \{o\}$. Consequently, $A$
	maps $K$ onto a maximal lattice-free polyhedron of the form $\real^r \times K'$. Straightforward vericiation shows that $K' \subseteq \real^{d-r}$ is maximal lattice-free. Thus, conditions I and III for the unbounded $K$ follow by application of I and III to the $(d-r)$-dimensional bounded maximal lattice-free set $K'$.
\end{proof}

\begin{proof}[Proof of Theorem~\ref{lov:deg:thm}]
	Let $K \subseteq \real^d$ be maximal lattice-free and of dimension $<d$. It can be seen immediately that $K$ is a hyperplane, i.e., $K = a + L$, for some $a \in \real^d$ and a linear space $L$ of dimension $d-1$. Let us show that $L \ne \lin(L \cap \integer^d)$. Assume the contrary, that is, $L= \lin(L \cap \integer^d)$. Changing a basis of the lattice $\integer^d$, (as in Proof of Theorem~\ref{lov:thm}) we can assume $L=\real^{d-1} \times \{0\}$. Then $K =  \real^{d-1} \times \{\alpha \} $, where $\alpha$ is the last component of $a$. It follows that the lattice-free set $K$ is not maximal (since the lattice-free subset $\{\alpha\}$ of $\real^1$ is not maximal). This is a contradiction. 

	Conversely, let $K = a + L$, where $a \in \real^d$ and $L \ne \lin(L \cap \integer^d)$. Let us show that the lattice-free set $K$ is maximal. If $K$ were not maximal, one could find a lattice-free set $K'$ properly containing $K$. The set $K'$ is $d$-dimensional. Choose a line $l$ as in Lemma~\ref{rat:space:lem}. By Lemma~\ref{rat:lin:rec:cone:lem}, $K' + l$ is lattice-free. But $K' + l \supseteq K + l = a+ L + l = a + \real^d = \real^d$, which is a contradiction.
\end{proof}

%\bibliographystyle{amsplain}
%\bibliography{lovasz_on_max_free}

\providecommand{\bysame}{\leavevmode\hbox to3em{\hrulefill}\thinspace}
\providecommand{\MR}{\relax\ifhmode\unskip\space\fi MR }
% \MRhref is called by the amsart/book/proc definition of \MR.
\providecommand{\MRhref}[2]{%
  \href{http://www.ams.org/mathscinet-getitem?mr=#1}{#2}
}
\providecommand{\href}[2]{#2}

\end{document}